\documentclass[12pt,reqno]{amsart}
\setlength{\hoffset}{-.5in}
\setlength{\voffset}{-.25in}
\usepackage{amssymb,latexsym}
\usepackage{amsthm}
\usepackage{mathtools}
\usepackage{url}

\textwidth=6.175in
\textheight=8.5in

\makeatletter
\@namedef{subjclassname@2010}{%
  \textup{2010} Mathematics Subject Classification}
\makeatother

\newtheorem{thm}{Theorem}[section]
\newtheorem{cor}[thm]{Corollary}
\newtheorem{lem}[thm]{Lemma}
\newtheorem{prop}[thm]{Proposition}

\theoremstyle{definition}
\newtheorem{defin}[thm]{Definition}

\newtheorem{exa}[thm]{Example}

\numberwithin{equation}{section}

\newcommand{\Z}{\mathbb Z}
\newcommand{\Q}{\mathbb Q}
\newcommand{\R}{\mathbb R}
\newcommand{\op}{\operatorname}

\begin{document}

\title[Class numbers of totally real fields and the Weber problem]{Class numbers of totally real fields and applications to the Weber class number problem}

\author[John C. Miller]{John C. Miller}
\address{Department of Mathematics\\ Rutgers University\\
Hill Center for the Mathematical Sciences\\110 Frelinghuysen Road\\Piscataway, NJ 08854-8019}
\email{jcmiller@math.rutgers.edu}

\date{}

\begin{abstract}
The determination of the class number of totally real fields of large discriminant is known to be a difficult problem.  The Minkowski bound is too large to be useful, and the root discriminant of the field can be too large to be treated by Odlyzko's discriminant bounds.  We describe a new technique for determining the class number of such fields, allowing us to attack the class number problem for a large class of number fields not treatable by previously known methods.  We give an application to Weber's class number problem, which is the conjecture that all real cyclotomic fields of power of 2 conductor have class number 1.
\end{abstract}

\subjclass[2010]{Primary 11R29, 11R80; Secondary 11R18, 11Y40}

\keywords{class number, totally real field, cyclotomic field, Weber class number problem}

\maketitle

\section{Introduction}
Although the class number is a fundamental invariant of number fields, the problem of determining the class number is rather difficult for fields of large discriminant.  Even cyclotomic fields of relatively small conductor have discriminants too large for their class numbers to be calculated.

The difficulty is that the Minkowski bound for a totally real field of degree $n$,
\[M(K) = \frac{n!}{n^n} \sqrt{|d(K)|},\]
is often far too large to be useful.  For example, to prove that the real cyclotomic field of conductor 256 has class number 1 using the Minkowski bound, we would need to check that every prime integer below the Minkowski bound factors into principal prime ideals, requiring us to check more than $10^{78}$ primes!

The approach of using Odlyzko's discriminant bounds can handle fields of larger discriminant than using the Minkowski bound, but this technique, as applied by Masley \cite{Masley} and van der Linden \cite{Linden}, encountered a barrier:  Odlyzko's discriminant lower bounds could only establish an upper bound for the class number of a totally real field of degree $n$ if its \emph{root discriminant}, the $n$th root of the discriminant, was sufficiently small.

We will overcome this barrier by establishing lower bounds for sums over the prime ideals of the Hilbert class field.

\section{Application to the Weber class number problem}
Weber \cite{Weber} studied the class numbers of the real cyclotomic fields $\Q(\zeta_{2^k}+\zeta_{2^k}^{-1})$, and proved that their class numbers are odd for all $k$.  Fukuda and Komatsu \cite{Fukuda} went much further and proved that no primes less than $10^9$ can divide these class numbers, which suggests that the class numbers of these fields may in fact all be $1$.  This conjecture, known as \emph{Weber's class number problem}, is also supported by Cohen-Lenstra heuristics \cite{Buhler}. 

Using Odlyzko's discriminant bounds, van der Linden \cite{Linden} proved that the class number of $\Q(\zeta_{128}+\zeta_{128}^{-1})$ is $1$, and, under the assumption of the generalized Riemann hypothesis (GRH), proved that $\Q(\zeta_{256}+\zeta_{256}^{-1})$ has class number $1$.  However, due to their rather large discriminants, his method could neither unconditionally prove that $\Q(\zeta_{256}+\zeta_{256}^{-1})$ has class number $1$, nor could it be applied to $\Q(\zeta_{512}+\zeta_{512}^{-1})$, even under the assumption of the generalized Riemann hypothesis.

However, by counting sufficiently many prime ideals of the Hilbert class field, we overcome the problem of the large discriminants, and prove the following.

\begin{thm}\label{MainResult256}
The class number of the real cyclotomic field $\Q(\zeta_{256}+\zeta_{256}^{-1})$ is $1$.
\end{thm}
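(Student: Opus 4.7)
The plan is to argue by contradiction: suppose $h:=h_K\geq 2$, and let $H$ denote the Hilbert class field of $K=\Q(\zeta_{256}+\zeta_{256}^{-1})$. Since $K$ is totally real of degree $64$ and $H/K$ is unramified, $H$ is totally real of degree $64h$ with the same root discriminant as $K$. That root discriminant is computed from the conductor--discriminant formula to be close to $2^{7}=128$; in particular it exceeds Odlyzko's unconditional asymptotic lower bound for totally real fields, which is precisely why the Masley--van der Linden strategy of applying an Odlyzko inequality to $H$ and discarding all prime information breaks down at this conductor.

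The analytic engine is an Odlyzko--Poitou--Serre-type inequality derived from the explicit formula for $\zeta_H$ evaluated against a suitable nonnegative test function $f$. After dropping the (nonnegative) contribution of the nontrivial zeros one obtains a bound of the schematic form
\[
\log |d_H|^{1/[H:\Q]}\;\geq\; A(f;r_1,r_2)\;+\;\frac{1}{[H:\Q]}\,S_f(H),
\]
where $S_f(H)$ collects weighted contributions from the prime ideals of $H$ of small norm. The earlier barrier amounts to using the trivial bound $S_f(H)\geq 0$; the new idea announced in the abstract is to replace this by a genuine, nontrivial lower bound on $S_f(H)$ obtained by exhibiting enough explicit small-norm prime ideals of $H$.

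To produce those primes I would invoke class field theory: a prime $\mathfrak{p}$ of $K$ splits completely in $H$ precisely when $\mathfrak{p}$ is principal. Accordingly, I would enumerate the rational primes $p\equiv\pm 1\pmod{256}$ up to a suitable bound $X$---these being exactly the rational primes that split completely in $K$---decompose each such $p$ into its $64$ degree-one prime-ideal factors in $\mathcal{O}_K$, and test each factor for principality by direct ideal arithmetic. Every principal factor of norm $p$ then produces $h$ degree-one primes of $H$ above $p$, and so contributes to $S_f(H)$ an amount proportional to $h$; the normalized quantity $\tfrac{1}{64h}S_f(H)$ is therefore bounded below uniformly in $h$. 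Substituting this lower bound into the displayed inequality, optimizing over admissible $f$, and comparing against the known root discriminant of $K$ should yield a strict contradiction, forcing $h=1$.

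The main obstacle I expect is computational: in a totally real field of degree $64$ with large regulator, verifying principality for a long list of small-norm degree-one prime ideals demands careful ideal arithmetic, and one must produce \emph{enough} such principal primes for the gain in $S_f(H)$ to close the analytic gap between $A(f;64h,0)$ and the root discriminant of $K$ for every $h\geq 2$. The secondary task of choosing $f$ so as to optimally exploit the known primes is delicate but standard in this circle of ideas; granting both, the contradiction---and with it the theorem---follows.
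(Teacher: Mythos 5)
Your analytic framework matches the paper's: apply a Poitou--Weil explicit-formula inequality to the Hilbert class field $H$, note $\operatorname{rd}(H)=\operatorname{rd}(K)$, and replace the trivial bound on the prime sum by a nontrivial one coming from primes of $K$ that split completely in $H$, i.e.\ principal degree-one primes. However, there are two concrete gaps in how you propose to carry this out.

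\emph{First, principality testing.} You propose to decompose each rational prime $p\equiv\pm1\pmod{256}$ into its $64$ degree-one factors in $\mathcal{O}_K$ and ``test each factor for principality by direct ideal arithmetic.'' In a totally real field of degree $64$ with enormous regulator, deciding whether a given ideal is principal is essentially as hard as computing the class group itself, which is the very problem at hand. The paper runs the logic in the opposite, and crucially cheaper, direction: it searches over sparse integer vectors in an explicit $\mathbb{Z}$-basis (the cyclotomic units $c_0,\dots,c_{63}$) and computes the norm of the resulting element $x$. If $N(x)$ is a prime $p\equiv\pm1\pmod{256}$, then $(x)$ is automatically a principal degree-one prime above $p$; since $K/\Q$ is abelian, all primes above $p$ are Galois conjugates of $(x)$ and hence principal, and each such $p$ contributes $2\sum_m \frac{\log p}{p^{m/2}}F(m\log p)$ to the normalized prime sum. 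Composite norms $N(x)=pq$ are further exploited to produce class-group relations and extract additional principal primes. No principality test is ever performed; the norm computation certifies principality for free.

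\emph{Second, the endgame.} You expect the inequality to ``force $h=1$'' directly. It cannot: even with tens of thousands of split primes the paper's computation yields only $B>0.0021$ and hence $h<2c\sqrt{\pi}/(nB)<5539$. The proof is completed by a separate arithmetic input: Fukuda and Komatsu's theorem that no prime below $10^9$ divides the class number of any $\Q(\zeta_{2^k}+\zeta_{2^k}^{-1})$. Combined with $h<5539$ this forces $h=1$. Without such an external divisibility result, your contradiction argument for ``every $h\geq2$'' does not close. You should also note that in the unconditional case one must take $F$ of the form $f(x)/\cosh(x/2)$ with $f\geq0$ and $\widehat f\geq0$, so that $\Phi$ is nonnegative on the entire critical strip, not merely on the critical line; this constraint on $F$ is what makes the required number of primes so large.
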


\begin{thm}\label{MainResult512}
Under the assumption of the generalized Riemann hypothesis, the class number of the real cyclotomic field $\Q(\zeta_{512}+\zeta_{512}^{-1})$ is $1$.
\end{thm}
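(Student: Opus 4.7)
The plan is to prove Theorem~\ref{MainResult512} by applying the paper's prime-counting technique to $K = \Q(\zeta_{512}+\zeta_{512}^{-1})$. Set $n = [K:\Q] = 128$, let $h = h_K$, and let $H$ be the Hilbert class field of $K$, of degree $128h$ over $\Q$. For real cyclotomic fields of $2$-power conductor the narrow and ordinary class numbers agree, so $H$ is totally real; since $H/K$ is everywhere unramified, $d(H) = d(K)^h$ and therefore $\op{rd}(H) = \op{rd}(K)$, a fixed explicit quantity computable from the conductor-discriminant formula.

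First I would apply the Weil--Poitou--Serre explicit formula to $\zeta_H(s)$ under GRH with a suitable positive, compactly supported test function $F$, producing an inequality that relates $\log \op{rd}(H) = \log \op{rd}(K)$, an archimedean analytic term $A(F)$ depending on $F$ and the signature, and a weighted sum $\Sigma_F(H)$ over prime-power ideals of $H$. Choosing $F$ so that $A(F) > \log \op{rd}(K)$ forces a nontrivial lower bound on $\Sigma_F(H)$ proportional to $[H:\Q] = 128h$. The new ingredient, following the technique of the paper, is to unfold $\Sigma_F(H)$ via the decomposition theory of $H/K$: each prime $\mathfrak{p}$ of $K$ contributes $h/e_{\mathfrak{p}}$ primes of $H$ of residue degree $e_{\mathfrak{p}}$, where $e_{\mathfrak{p}}$ is the order of $[\mathfrak{p}]$ in $\op{Cl}(K)$, and using $e_{\mathfrak{p}} \mid h$ together with the monotonicity of $F$ one obtains bounds on $\Sigma_F(H)$ that depend only on the splitting of small rational primes in $K$ — data entirely computable without knowing the class group of $K$. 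Combining the constraint from the explicit formula with the lower bound on $\Sigma_F(H)$ obtained from the prime-splitting computation forces $h \leq h_0$ for some explicit threshold $h_0$. The theorem of Fukuda--Komatsu that no prime below $10^9$ divides $h$ then rules out every $h$ with $2 \leq h \leq h_0$ provided $h_0 < 10^9$, which forces $h = 1$.

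The main obstacles are twofold. First, since $\op{rd}(K) \approx 2^8$ is already comparable to the asymptotic GRH-Odlyzko lower bound for totally real fields, the gap $A(F) - \log \op{rd}(K)$ is small even at the optimum, so the test function $F$ must be very carefully chosen for the two inequalities on $\Sigma_F(H)$ to leave a usable window of admissible $h$. Second, the method requires an extensive, rigorous enumeration of the splittings of small rational primes in the degree-$128$ field $K$ — concretely, factoring the minimal polynomial of $\zeta_{512} + \zeta_{512}^{-1}$ modulo $p$ for many $p$ — together with explicit tail estimates controlling the contribution of the omitted large primes, so as to guarantee that the resulting lower bound on $\Sigma_F(H)$ rigorously exceeds what the explicit formula permits whenever $h \geq 2$.
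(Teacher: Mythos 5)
There is a genuine gap in your proposal, and it sits at the heart of the method. You correctly describe the decomposition law: a prime $\mathfrak{p}$ of $K$ with ideal class of order $e_{\mathfrak{p}}$ in $\op{Cl}(K)$ lifts to $h/e_{\mathfrak{p}}$ primes of residue degree $e_{\mathfrak{p}}$ in the Hilbert class field $H$. But you then assert that, ``using $e_{\mathfrak{p}} \mid h$ together with the monotonicity of $F$,'' one gets a lower bound on the prime-ideal sum $\Sigma_F(H)$ that depends only on how rational primes split in $K$, ``without knowing the class group of $K$.'' This is not so. For a totally split rational prime $p$, the contribution of the primes above $\mathfrak{p} \mid p$ to $\Sigma_F(H)$, after dividing by $[H:\Q] = hn$, is a sum of terms of the form $\frac{\log p}{p^{m e_{\mathfrak{p}}/2}} F(m e_{\mathfrak{p}} \log p)$. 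Because $F$ is nonnegative and decaying, this contribution decreases sharply with $e_{\mathfrak{p}}$: monotonicity of $F$ forces the a priori lower bound toward zero as $e_{\mathfrak{p}}$ grows, and the argument would be circular if you tried to cap $e_{\mathfrak{p}}$ using the very bound on $h$ you are trying to prove. In other words, factoring the minimal polynomial of $\zeta_{512}+\zeta_{512}^{-1}$ modulo $p$ tells you $p$ totally splits, but not that the resulting prime ideals are \emph{principal}, and only the latter gives the large contribution with $e_{\mathfrak{p}} = 1$ that the explicit formula needs.

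The paper's proof fills this gap with its central computational lemma: in $\Q(\zeta_{512}+\zeta_{512}^{-1})$ it explicitly exhibits algebraic integers of norm $3583$, $5119$, $6143$, $7681$, $8191$, $10753$, $11777$, $12289$, $12799$, $13313$ (the ten smallest totally split primes). The existence of an element of norm $p$ certifies that a prime ideal above $p$ is principal, hence $e_{\mathfrak{p}} = 1$ and the full $2\sum_{m} \frac{\log p}{p^{m/2}} F(m\log p)$ contribution is available for each such $p$. This is precisely what Lemma~\ref{SplitPrimeLemma} encodes (note the hypothesis ``totally split into \emph{principal} prime ideals''). Finding these generators is the genuinely hard part: naive sparse-vector searches in the $b_j$ basis fail for conductor $512$, and the paper resorts to searching in the cyclotomic-unit basis $c_k$, then bootstrapping from class-group relations (together with Weber's oddness theorem, to pass from $P^2$ principal to $P$ principal). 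Your proposal omits all of this, and as written the lower bound on $\Sigma_F(H)$ you need does not exist; the rest of the argument — the choice of $F$, the estimate of the archimedean terms, the appeal to Fukuda--Komatsu to eliminate $2 \le h < 147$ — is correctly identified and matches the paper.
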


The above theorems, together with knowledge of the relative class number \cite[pg. 412]{Washington}, allow us to determine the class number of the full cyclotomic field.

\begin{cor}\label{CorResult256}
The class number of the cyclotomic field $\Q(\zeta_{256})$ is \[10{,}449{,}592{,}865{,}393{,}414{,}737.\]
\end{cor}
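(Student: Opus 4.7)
The plan is to combine Theorem~\ref{MainResult256} with the standard factorization of the class number of a cyclotomic field into its ``plus'' and ``minus'' parts. Writing $h_n$ for the class number of $\Q(\zeta_n)$ and $h_n^+$ for that of its maximal real subfield $\Q(\zeta_n+\zeta_n^{-1})$, it is a classical theorem (due to Kummer for prime-power conductor; see Washington, Theorem 4.14) that $h_n^+$ divides $h_n$, and the quotient $h_n^-:=h_n/h_n^+$ is the so-called relative class number. Specializing to $n=256$, Theorem~\ref{MainResult256} gives $h_{256}^+=1$, so the full class number equals the relative class number.

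Next I would invoke the computation of $h_{256}^-$. The relative class number of $\Q(\zeta_{2^k})$ admits an explicit expression as a product of generalized Bernoulli numbers (equivalently, as a determinant involving values of odd Dirichlet characters modulo $2^k$), and for $k\le 8$ this has been evaluated; the resulting table is reproduced in Washington \cite[pg.\ 412]{Washington}, the reference cited in the statement. Reading off the entry for conductor $256$ yields
\[
h_{256}^- = 10{,}449{,}592{,}865{,}393{,}414{,}737.
\]

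Putting the two ingredients together gives $h_{256}=h_{256}^+\cdot h_{256}^-=1\cdot h_{256}^-$, which is the claimed value.

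There is essentially no obstacle: the only nontrivial input is Theorem~\ref{MainResult256} (whose proof is the body of the paper), and once that is in hand the corollary is a one-line citation of the classical factorization together with the tabulated value of $h_{256}^-$. The proof will therefore be very short, amounting to little more than the observation that $h^+=1$ upgrades the relative class number computation to the full class number.
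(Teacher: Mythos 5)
Your proposal matches the paper's approach exactly: the paper deduces the corollary from Theorem~\ref{MainResult256} together with the tabulated relative class number $h_{256}^-$ from Washington \cite[pg.\ 412]{Washington}, using $h_{256}=h_{256}^+\cdot h_{256}^-$. Nothing further is needed.
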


\begin{cor}\label{CorResult512}
Under the generalized Riemann hypothesis, the class number of the cyclotomic field $\Q(\zeta_{512})$ is \[6{,}262{,}503{,}984{,}490{,}932{,}358{,}745{,}721{,}482{,}528{,}922{,}841{,}978{,}219{,}389{,}975{,}605{,}329.\]
\end{cor}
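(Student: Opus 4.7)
The plan is to combine Theorem \ref{MainResult512} with the classical factorization $h(\Q(\zeta_n)) = h^+(\Q(\zeta_n)) \cdot h^-(\Q(\zeta_n))$, where $h^+$ is the class number of the maximal real subfield and $h^-$ is the relative class number. Since Theorem \ref{MainResult512} asserts, under GRH, that $h^+(\Q(\zeta_{512})) = h(\Q(\zeta_{512}+\zeta_{512}^{-1})) = 1$, the entire class number of $\Q(\zeta_{512})$ equals its relative class number.

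Thus the task reduces to computing $h^-(\Q(\zeta_{512}))$ exactly. The standard tool is the analytic formula
\[
h^-(\Q(\zeta_{512})) = Q \, w \prod_{\chi \text{ odd}} \left(-\tfrac{1}{2}\, B_{1,\chi}\right),
\]
with $Q=1$, $w=1024$, and the product ranging over the odd Dirichlet characters modulo $512$; this is the formula whose use is described at \cite[pg.~412]{Washington}. The computational step, which I would carry out next, is to enumerate these characters, evaluate each generalized Bernoulli number $B_{1,\chi} = \frac{1}{512}\sum_{a=1}^{512}\chi(a)\, a$ exactly in the appropriate cyclotomic integer ring, and then multiply the relevant factors together.

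The main obstacle is computational rather than conceptual: the product lies in a cyclotomic ring of large degree, intermediate values are enormous (the final answer has over $80$ decimal digits), and one must guarantee exact arithmetic, not a numerical approximation, so that the result is certified as an integer. I would handle this by doing the computation either in exact rational form or by evaluating at sufficient floating-point precision and then rounding, cross-checking against independent implementations (for instance, the tables of Hakkarainen or a direct PARI/GP computation) to eliminate any arithmetic error.

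Finally, with $h^+ = 1$ from Theorem \ref{MainResult512} and the computed value of $h^-$ in hand, the conclusion $h(\Q(\zeta_{512})) = 1 \cdot h^-(\Q(\zeta_{512}))$ yields the numerical value stated in Corollary \ref{CorResult512}, valid under GRH since the hypothesis of Theorem \ref{MainResult512} is the only conditional ingredient.
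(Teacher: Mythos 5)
Your route is the same as the paper's: Corollary \ref{CorResult512} follows from Theorem \ref{MainResult512} (which gives $h^+(\Q(\zeta_{512}))=1$ under GRH) together with the decomposition $h = h^+ h^-$ and the known value of the relative class number, which the paper simply reads off from the table on page 412 of Washington rather than recomputing it. Your reconstruction of how $h^-$ is obtained via the analytic formula is fine in outline, but note one numerical slip: for $\Q(\zeta_{512})$ the group of roots of unity is $\mu_{512}$ (since $512$ is even, $\Q(\zeta_n)$ contains exactly $\mu_n$), so in $h^- = Q\,w\prod_{\chi\text{ odd}}(-\tfrac12 B_{1,\chi})$ one must take $w=512$, not $1024$; with $w=1024$ the computation would be off by a factor of $2$.
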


The latter field has degree $256$ and a discriminant of approximately $3 \times 10^{616}$.  The author is unaware of any other number field of such large degree or discriminant for which the class number has been calculated under the generalized Riemann hypothesis.

It is striking that the class number of the real subfield is so small compared to the class number of the full cyclotomic field.

\section{Upper bounds for class numbers of fields of small discriminant}
In this section we briefly review the theory of root discriminants and the application of Odlyzko's discriminant bounds to find upper bounds for class numbers of totally real fields.  Further details can be found in \cite{Linden}, \cite{Masley} and \cite{Odlyzko}.

\begin{defin}
Let $K$ denote a number field of degree $n$ over $\mathbb Q$.  Let $d(K)$ denote its discriminant.  The \emph{root discriminant} $\operatorname{rd}(K)$ of $K$ is defined to be:
\[\operatorname{rd}(K)=|d(K)|^{1/n}.\]
\end{defin}

\begin{prop}
Let $L/K$ be an extension of number fields.  Then
\[\operatorname{rd}(K) \leq \operatorname{rd}(L),\]
with equality if and only if $L/K$ is an unramified at all finite primes, i.e. no prime ideal of $K$ ramifies in $L$.
\end{prop}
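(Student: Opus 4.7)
The plan is to reduce the statement to the tower formula relating absolute and relative discriminants. Set $n = [K:\Q]$ and $m = [L:K]$, so $[L:\Q] = mn$. The standard identity
\[|d(L)| = N_{K/\Q}(\mathfrak d_{L/K})\cdot |d(K)|^{m}\]
expresses the absolute discriminant of $L$ in terms of the absolute discriminant of $K$ and the norm of the relative discriminant ideal $\mathfrak d_{L/K} \subseteq \mathcal O_K$. Taking $mn$-th roots immediately yields the comparison
\[\operatorname{rd}(L) = N_{K/\Q}(\mathfrak d_{L/K})^{1/(mn)} \cdot \operatorname{rd}(K),\]
which is the identity I will read off in two different ways.

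First, the inequality $\operatorname{rd}(K) \leq \operatorname{rd}(L)$ follows at once: $\mathfrak d_{L/K}$ is an integral ideal of $\mathcal O_K$, so its norm is a positive rational integer, and the factor $N_{K/\Q}(\mathfrak d_{L/K})^{1/(mn)}$ is at least $1$. Second, equality holds precisely when $N_{K/\Q}(\mathfrak d_{L/K}) = 1$, which (since the norm of a proper integral ideal is strictly greater than $1$) forces $\mathfrak d_{L/K} = \mathcal O_K$.

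To close the loop with ramification I will invoke the classical characterization of the relative discriminant: a finite prime $\mathfrak p$ of $K$ divides $\mathfrak d_{L/K}$ if and only if $\mathfrak p$ ramifies in $L$. Consequently $\mathfrak d_{L/K} = \mathcal O_K$ is equivalent to $L/K$ being unramified at every finite prime, giving the ``if and only if'' half of the statement. The proof is essentially bookkeeping once these two standard ingredients are in hand; the only point requiring care is to remember that $\mathfrak d_{L/K}$ lives in $\mathcal O_K$ rather than in $\Z$, which is exactly why the norm $N_{K/\Q}$ appears in the tower formula and hence in the comparison of root discriminants.
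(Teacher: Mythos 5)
Your proof is correct and follows essentially the same route as the paper's: both invoke the tower formula $|d(L)| = N(\mathfrak d_{L/K})\,|d(K)|^{[L:K]}$, extract the root-discriminant comparison from it, and then translate the equality case into $\mathfrak d_{L/K} = \mathcal O_K$ via the standard criterion that a prime of $K$ ramifies in $L$ if and only if it divides the relative discriminant. You merely spell out the intermediate identity $\operatorname{rd}(L) = N(\mathfrak d_{L/K})^{1/[L:\Q]}\operatorname{rd}(K)$ a bit more explicitly than the paper does.
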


\begin{proof}
The discriminants of $K$ and $L$ are related by the formula
\[|d(L)| = N(d(L/K))|d(K)|^{[L:K]}\]
where $d(L/K)$ denotes the relative discriminant ideal and $N$ denotes the absolute norm of the ideal, from which the first statement follows.  A prime of $K$ ramifies in $L$ if and only if the prime divides the relative discriminant $d(L/K)$.  Thus, $L/K$ is unramified if and only if $d(L/K)$ is the unit ideal, proving the second statement.
\end{proof}

\begin{cor}
Let $K$ be a number field.  Then the Hilbert class field of $K$ has the same root discriminant as $K$.
\end{cor}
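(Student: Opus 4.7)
The plan is to apply the preceding proposition directly, since the Hilbert class field is, by construction, the maximal abelian extension of $K$ that is unramified at all finite primes. In particular, if $H$ denotes the Hilbert class field of $K$, then $H/K$ is a finite extension in which no prime ideal of $K$ ramifies.

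First I would recall the defining property of the Hilbert class field $H$ of $K$: it is an everywhere (finite-prime) unramified abelian extension of $K$. Any textbook definition suffices here, and this is the only property needed. Then I would invoke the proposition above applied to the extension $H/K$: the equality clause states that $\operatorname{rd}(K)=\operatorname{rd}(L)$ whenever $L/K$ is unramified at all finite primes. Taking $L=H$, the hypothesis is satisfied, so we conclude $\operatorname{rd}(H)=\operatorname{rd}(K)$.

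There is no real obstacle; the corollary is essentially a one-line application of the proposition. The only step that requires any care is to make explicit that ``unramified'' in the definition of the Hilbert class field refers precisely to non-ramification at \emph{finite} primes (infinite primes are allowed to ramify in some conventions, but this does not affect the relative discriminant ideal $d(H/K)$, which only records ramification at finite primes). Once this is noted, the proof is immediate.
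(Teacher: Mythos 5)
Your proposal is correct and is exactly the argument the paper intends: the corollary follows immediately from the equality clause of the preceding proposition, since the Hilbert class field is by definition unramified over $K$ at all finite primes. Your remark about conventions for ramification at infinite places is a reasonable point of care but does not change anything, as the relative discriminant ideal only sees finite primes.
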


Odlyzko constructed a table \cite{OdlyzkoTable} of pairs $(A,E)$ such that a totally real field $K$ of degree $n$ has a lower bound for its discriminant,
\[|d(K)| > A^n e^{-E}.\]
If we apply Odlyzko's discriminant bounds to the Hilbert class field of $K$ and use the above corollary, we get
\[ \log{\op{rd}(K)} > \log{A} - \frac{E}{hn}.\]
If $\op{rd}(K) < A$, then we obtain an upper bound for the class number $h$,
\begin{equation} \label{eqref:ClassNumberBound}
h < \frac{E}{n(\log{A} - \log{\op{rd}(K))}}.
\end{equation}
Thus, we can establish a class number upper bound for fields of small root discriminant.

However, this method, used by Masley and van der Linden, encounters an obstacle if the field has large discriminant.  If the root discriminant is larger than the maximum $A$ in Odlyzko's table, the above method can not establish a class number upper bound.  The maximum $A$ in Odlyzko's table is $60.704$ (or $213.626$ under the assumption of the generalized Riemann hypothesis).
\section{An identity for the class number of a totally real field}
Let $F$ be an Schwarz class function on $\R$ with $F(0)=1$ and $F(-x)=F(x)$.  Let $\Phi$ be defined by
\[\Phi(s) = \int_{-\infty}^{\infty} F(x) e^{(s - 1/2)x} \, dx.\]

Let $K$ be a  number field of degree $n$ with $r_1$ real embeddings.  Poitou's version \cite{Poitou} of Weil's ``explicit formula" for the Dedekind zeta function of $K$ states that
\begin{align*}
\log d(K) &= r_1\frac{\pi}{2} + n(\gamma + \log 8\pi) - n \int_0^{\infty} \frac{1-F(x)}{2 \sinh \frac{x}{2}} \, dx \\ & \quad - r_1 \int_0^{\infty} \frac{1-F(x)}{2 \cosh \frac{x}{2}} \, dx - 4 \int_0^{\infty} F(x) \cosh \frac{x}{2} \, dx \\ & \quad + \sum_{\rho} \Phi(\rho) + 2 \sum_{\mathfrak P} \sum_{m=1}^{\infty} \frac{\log N\mathfrak P}{N\mathfrak P^{m/2}}F(m \log N\mathfrak P)
\end{align*}
where $\gamma$ is Euler's constant.  The first sum is over the nontrivial zeros of the Dedekind zeta function of $K$, and the second sum is over the prime ideals of $K$.

Let $K$ be a totally real field.  We can apply the explicit formula to the Hilbert class field $H(K)$ of $K$.  Let $h$ denote the class number of $K$.  Since
\[\log d(H(K)) = hn \log \op{rd}(H(K)) = hn \log \op{rd}(K),\]
we have
\begin{align*}
hn \log \op{rd}(K) &= hn\left(\frac{\pi}{2} + \gamma + \log 8\pi - \int_0^\infty \frac{1-F(x)}{2}\left( \frac{1}{\sinh{\frac{x}{2}}} + \frac{1}{\cosh{\frac{x}{2}}} \right) dx \right)\\ & \quad - 4 \int_0^{\infty} F(x) \cosh \frac{x}{2} \, dx + \sum_{\rho} \Phi(\rho) \\ & \quad + 2 \sum_{\mathfrak P} \sum_{m=1}^{\infty} \frac{\log N\mathfrak P}{N\mathfrak P^{m/2}}F(m \log N\mathfrak P)
\end{align*}
where the two sums are now over the nontrivial zeros of the Dedekind zeta function of $H(K)$ and the prime ideals of $H(K)$ respectively.  We rearrange this to get the identity

\begin{equation} \label{eqref:ClassNumberIdentity}
h = \frac{4 \mathcal H(F) / n}{C - \mathcal{G}(F) - \log \operatorname{rd}(K) + \frac{1}{hn} \sum_{\rho} \Phi(\rho) + \frac{2}{hn} \sum_{\mathfrak P} \sum_{m=1}^{\infty} \frac{\log N\mathfrak P}{N\mathfrak P^{m/2}}F(m \log N\mathfrak P)}
\end{equation}
where
\[C = \frac{\pi}{2} + \gamma + \log{8\pi},\]
\[\mathcal{G}(F) = \int_0^\infty \frac{1-F(x)}{2}\left( \frac{1}{\sinh{\frac{x}{2}}} + \frac{1}{\cosh{\frac{x}{2}}} \right) \, dx,\]
and
\[\mathcal{H}(F) = \int_0^\infty F(x) \cosh{\frac{x}{2}} \, dx.\]

Suppose we choose $F$ so that $F$ is nonnegative and so that $\Phi(\rho) \geq 0$ for all nontrivial zeros $\rho$.  If it is true that
\[C - \mathcal{G}(F) - \log \operatorname{rd}(K) > 0\]
then we get the upper bound for the class number,
\[h \leq \frac{4 \mathcal H(F)}{n}\frac{1}{C - \mathcal{G}(F) - \log \operatorname{rd}(K)}.\]
If we choose $F$ appropriately, we can recover the class number upper bounds (\ref{eqref:ClassNumberBound}) obtained by Odlyzko's discriminant bounds.

\section{Upper bounds for class numbers of fields of large discriminant}
If a number field has root discriminant greater than $4\pi e^{\gamma + 1} = 60.839...$ (or greater than $8\pi e^{\gamma + \pi/2} = 215.33...$ if GRH is assumed), then we will have
\[C - \mathcal{G}(F) - \log \operatorname{rd}(K) < 0,\]
so we can not establish a class number upper bound following the approach of the previous section.

However, \emph{if we have further knowledge of the zeros or the prime ideals of the Hilbert class field}, then we may be able establish a nontrivial lower bound for the sums
\[\frac{1}{hn} \sum_{\rho} \Phi(\rho)\]
or
\[ \frac{2}{hn} \sum_{\mathfrak P} \sum_{m=1}^{\infty} \frac{\log N\mathfrak P}{N\mathfrak P^{m/2}}F(m \log N\mathfrak P)\]
that is sufficiently large as to ensure a positive lower bound for the denominator of (\ref{eqref:ClassNumberIdentity}). Thus we may obtain a class number upper bound for fields with discriminants too large to have been treated by earlier methods.  Since it is difficult to make any explicit estimates of the low-lying zeros of the zeta function of the Hilbert class field, we must rely on the contribution of the prime ideals.

In his proof \cite{Linden} that $\Q(\zeta_{128}+\zeta_{128}^{-1})$ has class number 1, van der Linden used the contribution from the ramified prime above 2 to establish a class number upper bound.  Unfortunately, there is only one ramified prime and its contribution in not sufficient for conductors 256 or 512.  Fortunately, we can use the many unramified primes.

Suppose a prime integer $p$ totally splits in the field $K$ into principal prime ideals.  Since principal ideals totally split in the Hilbert class field, we have $hn$ prime ideals in the Hilbert class field that lie over $p$, each with a norm of $p$.  Thus for such $p$ we get a contribution to the prime ideal term of the explicit formula
\[\frac{2}{hn} \sum_{\mathfrak P|p} \sum_{m=1}^\infty \frac{\log N\mathfrak P}{N\mathfrak P^{m/2}}F(m \log N\mathfrak P) = 2 \sum_{m=1}^\infty \frac{\log p}{p^{m/2}}F(m \log p).\]

The assumption of the generalized Riemann hypothesis now takes on critical importance.  Without assuming that the nontrivial zeros lie on the critical line, the function $F$ would have to be chosen so that $\Phi$ is nonnegative on the entire critical strip.  Thus $F$ would have to be of the form
\[F(x) = \frac{f(x)}{\cosh \frac{x}{2}},\]
with $f$ nonnegative and a nonnegative Fourier transform \cite{Odlyzko}.  Such a condition would force $F$ to decay so rapidly that many prime ideals may be needed contribute significantly to the explicit formula.

If, on the other hand, we assume truth of the generalized Riemann hypothesis, specifically that the nontrivial zeros of the zeta function of the Hilbert class field lie on the critical line $\frac{1}{2}+it$, then $F$ would only required to be nonnegative with nonnegative Fourier transform.  For example,  $F$ could be chosen to be the Gaussian function,
\[F(x) = e^{-(x/c)^2}\]
for some positive constant $c$.  For large $c$, this decays less rapidly, allowing for a larger contribution from the prime ideals.

We summarize the above discussion with the following two lemmas.  If we do not assume the generalized Riemann hypothesis,
\begin{lem}\label{SplitPrimeLemmaNoRH}
Let $K$ be a totally real field of degree $n$, and let
\[F(x) = \frac{e^{-\left(x/c\right)^2}}{\cosh{\frac{x}{2}}}\]
for some positive constant $c$.  Suppose $S$ is a subset of the prime integers which totally split into principal prime ideals of $K$.  Let
\begin{align*}
B &= \frac{\pi}{2} + \gamma + \log{8\pi}  - \log \operatorname{rd}(K) - \int_0^\infty \frac{1-F(x)}{2}\left( \frac{1}{\sinh{\frac{x}{2}}} + \frac{1}{\cosh{\frac{x}{2}}} \right) \, dx \\ & \quad + 2 \sum_{p \in S} \sum_{m=1}^\infty \frac{\log p}{p^{m/2}}F(m \log p).
\end{align*}
If $B > 0$ then we have an upper bound for the class number $h$ of $K$,
\[h < \frac{2c \sqrt{\pi}}{nB}.\]
\end{lem}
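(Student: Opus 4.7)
The plan is to read the bound directly off the class-number identity (\ref{eqref:ClassNumberIdentity}) applied to $K$ with the specific choice $F(x) = e^{-(x/c)^2}/\cosh(x/2)$. Note first that $F$ is Schwartz, even, nonnegative, and satisfies $F(0)=1$, so the explicit formula applies. The numerator simplifies immediately, since the $\cosh(x/2)$ factors cancel in $\mathcal H$:
\[\mathcal{H}(F) = \int_0^\infty e^{-(x/c)^2}\,dx = \frac{c\sqrt{\pi}}{2},\]
so $4\mathcal{H}(F)/n = 2c\sqrt{\pi}/n$, which matches the target numerator exactly. Thus the whole game is to show that the denominator of (\ref{eqref:ClassNumberIdentity}) is at least $B$.

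Two of the three remaining terms in the denominator are handled by positivity. Because $F \geq 0$ pointwise, every term of the double sum over prime ideals $\mathfrak{P}$ of $H(K)$ is nonnegative, so I may discard all contributions except those from primes $\mathfrak{P}$ lying above some $p \in S$. For such $p$, the hypothesis that $p$ splits completely into principal primes of $K$ means (as observed earlier in the paper) that $p$ splits completely in $H(K)$ into exactly $hn$ prime ideals of norm $p$, giving
\[\frac{2}{hn}\sum_{\mathfrak P \mid p}\sum_{m=1}^\infty \frac{\log N\mathfrak P}{N\mathfrak P^{m/2}}F(m\log N\mathfrak P) = 2\sum_{m=1}^\infty \frac{\log p}{p^{m/2}}F(m\log p),\]
and summing over $p \in S$ recovers precisely the prime sum appearing in $B$. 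The terms $C - \mathcal{G}(F) - \log \operatorname{rd}(K)$ reproduce the remaining pieces of $B$ verbatim.

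The main obstacle is the third piece: showing $\frac{1}{hn}\sum_\rho \Phi(\rho) \geq 0$. Since GRH is not assumed, one cannot restrict $\rho$ to the critical line, so I must prove $\Phi(\rho) \geq 0$ throughout the critical strip $0 \leq \Re(s) \leq 1$. This is exactly the reason for inserting the factor $1/\cosh(x/2)$ in $F$. Writing $F(x) = f(x)/\cosh(x/2)$ with $f(x) = e^{-(x/c)^2}$, a short manipulation yields
\[\Phi(s) = \int_{-\infty}^\infty \frac{2f(x)e^{sx}}{e^x+1}\,dx,\]
and because $f$ is even, nonnegative, and has nonnegative Fourier transform $\hat{f}(\xi) = c\sqrt{\pi}\,e^{-(c\xi)^2/4}$, the standard positivity argument of Odlyzko (\cite{Odlyzko}) delivers $\Phi(s) \geq 0$ on the entire closed critical strip. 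Granted this, the denominator of (\ref{eqref:ClassNumberIdentity}) is bounded below by $B$, and since the discarded prime-ideal contributions are strictly positive, the inequality is strict, giving $h < 2c\sqrt{\pi}/(nB)$ as claimed.
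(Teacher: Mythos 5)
Your proposal is correct and follows the same route the paper takes: it applies the class-number identity (\ref{eqref:ClassNumberIdentity}) with $F(x)=e^{-(x/c)^2}/\cosh(x/2)$, computes $4\mathcal H(F)/n = 2c\sqrt{\pi}/n$, uses the $\cosh(x/2)$ damping so that $\Phi\geq 0$ on the whole critical strip via Odlyzko's criterion ($f\geq 0$ with $\hat f\geq 0$), and lower-bounds the prime-ideal sum by the $hn$ norm-$p$ primes of $H(K)$ above each $p\in S$. The paper offers no separate formal proof of the lemma beyond this same discussion, so there is nothing to add.
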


On the other hand, if we do assume the truth of the generalized Riemann hypothesis, we have the following lemma.
\begin{lem}\label{SplitPrimeLemma}
Let $K$ be a totally real field of degree $n$, and let
\[F(x) = e^{-(x/c)^2}\]
for some positive constant $c$.  Suppose $S$ is a subset of the prime integers which totally split into principal prime ideals of $K$.  Let
\begin{align*}
B &= \frac{\pi}{2} + \gamma + \log{8\pi}  - \log \operatorname{rd}(K) - \int_0^\infty \frac{1-F(x)}{2}\left( \frac{1}{\sinh{\frac{x}{2}}} + \frac{1}{\cosh{\frac{x}{2}}} \right) \, dx \\ & \quad + 2 \sum_{p \in S} \sum_{m=1}^\infty \frac{\log p}{p^{m/2}}F(m \log p).
\end{align*}
If $B > 0$ then we have, under the generalized Riemann hypothesis, an upper bound for the class number $h$ of $K$,
\[h < \frac{2c \sqrt{\pi} e^{(c/4)^2}}{nB}.\]
\end{lem}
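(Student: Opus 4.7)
The plan is to apply the class number identity (\ref{eqref:ClassNumberIdentity}) to $F(x)=e^{-(x/c)^{2}}$, verify under GRH that every unnamed contribution in the denominator is nonnegative, and evaluate $\mathcal{H}(F)$ in closed form.

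First I would check the two sign conditions needed to pass from the identity to an inequality. The Gaussian $F$ is everywhere positive, and on the critical line $\Phi(\tfrac{1}{2}+it)=\int_{-\infty}^{\infty}e^{-(x/c)^{2}}e^{itx}\,dx=c\sqrt{\pi}\,e^{-(ct/2)^{2}}\geq 0$, since the Fourier transform of a Gaussian is a Gaussian. Under GRH every nontrivial zero $\rho$ of the Dedekind zeta function of $H(K)$ has the form $\tfrac{1}{2}+it$, so $(1/(hn))\sum_{\rho}\Phi(\rho)\geq 0$. Nonnegativity of $F$ also lets me discard all primes of $H(K)$ not lying above an element of $S$ and still retain a lower bound on the prime-ideal sum.

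Next I would translate ``$p$ splits totally into principal prime ideals of $K$'' into a clean explicit-formula contribution for $H(K)$. Because principal ideals of $K$ split completely in its Hilbert class field, each of the $n$ primes of $K$ above such a $p$ lifts to $h$ distinct primes of $H(K)$, for a total of $hn$ primes above $p$, each of norm $p$. The factor $2/(hn)$ in front of the prime sum therefore cancels the $hn$ count, and each $p\in S$ contributes exactly $2\sum_{m\geq 1}(\log p)\,p^{-m/2}F(m\log p)$; summing over $S$ reproduces the last line of $B$.

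Combining these two reductions, the denominator of (\ref{eqref:ClassNumberIdentity}) is bounded below by $B$. Whenever $B>0$ this yields $h\leq 4\mathcal{H}(F)/(nB)$, and the only remaining task is to evaluate $\mathcal{H}(F)=\int_{0}^{\infty}e^{-(x/c)^{2}}\cosh(x/2)\,dx$. Since the integrand is even, this equals $\tfrac{1}{2}\int_{-\infty}^{\infty}e^{-(x/c)^{2}}\cosh(x/2)\,dx$; splitting $\cosh$ into the two exponentials $e^{\pm x/2}$ and completing the square in each exponent produces a common factor $e^{c^{2}/16}$, giving $\mathcal{H}(F)=\tfrac{1}{2}\,c\sqrt{\pi}\,e^{(c/4)^{2}}$, which when substituted above yields the stated bound. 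No serious obstacle arises, as the entire mechanism is already assembled in the previous section; the main care is in the lifting of split principal primes of $K$ to primes of $H(K)$, and in checking the Fourier self-duality that makes $\Phi(\rho)$ nonnegative on the critical line.
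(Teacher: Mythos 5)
Your proof is correct and takes essentially the same route as the paper's own derivation (the discussion in Section~5 culminating in the lemma): you apply identity~(\ref{eqref:ClassNumberIdentity}), discard nonnegative terms after verifying $F\geq 0$ and $\Phi(\tfrac12+it)=c\sqrt{\pi}\,e^{-(ct/2)^2}\geq 0$ under GRH, use the $hn$-fold splitting of principal primes of $K$ in $H(K)$ to extract the contribution $2\sum_{m}(\log p)p^{-m/2}F(m\log p)$, and evaluate $\mathcal{H}(F)=\tfrac12 c\sqrt{\pi}\,e^{(c/4)^2}$ by completing the square, giving exactly the stated bound.
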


\section{The class number of $\Q(\zeta_{256}+\zeta_{256}^{-1})$}
Let $\Q(\zeta_m+\zeta_m^{-1})$ denote the $m$th real cyclotomic field, i.e. the maximal real subfield of the cyclotomic field $\Q(\zeta_m)$, where $\zeta_m$ is a primitive $m$th root of unity.  If $m$ is a power of $2$, then the discriminant of the real cyclotomic field is given by
\[d(\Q(\zeta_{2^k}+\zeta_{2^k}^{-1})) = 2^{(k-1)2^{k-2}-1}\]
and the root discriminant is
\[\operatorname{rd}(\Q(\zeta_{2^k}+\zeta_{2^k}^{-1})) = d(\Q(\zeta_{2^k}+\zeta_{2^k}^{-1}))^{2^{-(k-2)}} = 2^{k - 1 - 2^{-k+2}}.\]
In particular, we have
\[\operatorname{rd}(\Q(\zeta_{256}+\zeta_{256}^{-1})) =127.9891....\]
This root discriminant is too large to use Odlyzko's unconditional discriminant bound tables to establish an upper bound for the class number.  Therefore, we must show a sufficiently large contribution by prime ideals of small norm to the explicit formula in order to get an upper bound for the class number.  Although it is not difficult to find principal prime ideals of small norm in the $\Q(\zeta_{256}+\zeta_{256}^{-1})$, an unconditional proof that  $\Q(\zeta_{256}+\zeta_{256}^{-1})$ has class number $1$ will require that we exhibit principal prime ideals for a rather large number of primes.

We define the \emph{norm} of an element $x$ of a Galois number field $K$ to be
\[N(x) = \left| \prod_{\sigma \in \op{Gal}(K/\Q)} \sigma(x) \right|.\]

A prime integer $p$ totally splits in $\Q(\zeta_{256}+\zeta_{256}^{-1})$ if and only if $p$ is congruent to $\pm 1$ modulo $256$.  Thus, if there exists an algebraic integer with norm $p$ congruent to $\pm 1$ modulo $256$, then $p$ totally splits into principal prime ideals.

Let $\mathcal{O}$ denote the ring of integers of  $\Q(\zeta_{256}+\zeta_{256}^{-1})$.  Let $b_0 = 1$ and $b_j = 2 \cos{\frac{2 \pi j}{256}}$ for $j=1,\dots,63$, and let
\[c_k = \sum_{j=0}^k b_j.\]
for $k=0,\dots,63$.  Then $c_0,\dots,c_{63}$ is the basis for $\mathcal{O}$ that we will use.

Our strategy will be to search over a large number of ``sparse" vectors with respect to the basis $c_o,\dots,c_{63}$, i.e. vectors where almost all the coefficients are zero.  We make a list of those elements of $\mathcal{O}$ that have norms which are prime and are congruent to $\pm 1$ modulo $256$.  We will need tens of thousands of these primes to successfully establish an unconditional upper bound for the class number.

We consider all $x \in \mathcal{O}$ of the form
\[x = c_0 + a_1 c_{j_1} + a_2 c_{j_2} + a_3 c_{j_3} + a_4 c_{j_4} +a_5 c_{j_5} +a_6 c_{j_6},\]
with $1 \leq j_1 < j_2 < j_3 < j_4 < j_5 < j_6  \leq 63$ and $a_i \in  \{-1, 0, 1\}$ for $i=1,\dots, 6$.   Let $T$ be the set of all such $x$ .

The ideal $2\mathcal{O}$ is totally ramified.  Thus, if $x \in \mathcal{O}$ has even norm $N(x)$, we can divide $x$ by any element of norm $2$, say $b_1$, to get an algebraic integer $b_1^{-1} x$ with norm $N(x)/2$.  Therefore we consider the odd parts of all norms $N(x)$.  We define the set $U$ to be
\[U = \{\text{odd part of } N(x) | x \in T \}.\]

Let $S_1$ be the set of primes
\[S_1 = \{m : m \in U, m \text{ prime, }  m \equiv \pm 1 \,(\bmod \,256), m < 10^9 \}.\]
The set $S_1$ does not contain enough primes to establish a class number upper bound.  To supplement these primes, we can factor composites in $U$ using primes from $S_1$.  Let $S_2$ be set of primes
\[S_2 = \{p : pq \in U, p \text{ prime, } p \notin S_1, q \in S_1  \},\]
noting that if $N(x) = pq$ and $N(y)=q$, for $x, y \in \mathcal{O}$ for distinct primes $p$ and $q$, then $x/\sigma(y)$ is in $\mathcal{O}$ with norm $p$ for some Galois automorphism $\sigma$.

Let $S = S_1 \cup S_2$.  We apply Lemma \ref{SplitPrimeLemmaNoRH}, choosing $c = 210$ and putting
\[F(x) = \frac{e^{-\left(x/c\right)^2}}{\cosh{\frac{x}{2}}}.\]
A lower bound for the contribution from split primes is
\[2 \sum_{p \in S} \sum_{m=1}^\infty \frac{\log p}{p^{m/2}}F(m \log p) > 2 \sum_{p \in S} \sum_{m=1}^2 \frac{\log p}{p^{m/2}}F(m \log p) > 0.7023.\]
This still is not quite enough, so we supplement our prime ideal contribution by considering the totally ramified prime $2$.  This factors as $2\mathcal{O} = P^{64}$ where $P$ is a principal prime ideal of norm $2$, giving a contribution
\[\frac{2}{64} \sum_{m=1}^\infty \frac{\log p}{p^{m/2}}F(m \log p) > \frac{2}{64} \sum_{m=1}^{20} \frac{\log p}{p^{m/2}}F(m \log p) > 0.0331.\]

We have that $\log{\op{rd}( \Q(\zeta_{256}+\zeta_{256}^{-1})} = 4.8412...$ and we use numerical integration to find that
\[\mathcal{G}(F) = \int_0^\infty \frac{1-F(x)}{2}\left( \frac{1}{\sinh{\frac{x}{2}}} + \frac{1}{\cosh{\frac{x}{2}}} \right) \, dx < 1.2642.\]
Then we have
\begin{align*}
B &= \left(\frac{\pi}{2} + \gamma + \log{8\pi}\right)  - \log \operatorname{rd}(K) - \mathcal{G}(F) + 2 \sum_{\mathfrak P} \sum_{m=1}^{\infty} \frac{\log N\mathfrak P}{N\mathfrak P^{m/2}}F(m \log N\mathfrak P) \\ &> 5.3721 - 4.8412 - 1.2642 + 0.7023 + 0.0331 = 0.0021.
\end{align*}
Thus, we get a class number upper bound of
\[h < \frac{2c \sqrt{\pi}}{nB} < 5539.\]
Finally, we apply the results of Fukuda and Komatsu \cite{Fukuda} to prove that $h = 1$, proving Theorem \ref{MainResult256}.

\section{The class number of $\Q(\zeta_{512}+\zeta_{512}^{-1})$}
The field $\Q(\zeta_{512}+\zeta_{512}^{-1})$ has a root discriminant
\[\operatorname{rd}(\Q(\zeta_{512}+\zeta_{512}^{-1})) =254.6175....\]
which exceeds $8\pi e^{\gamma + \pi/2} = 215.33...$.  Therefore, we must show a contribution to the explicit formula by prime ideals of small norm to get an upper bound for the class number, even under the assumption of the generalized Riemann hypothesis.

In contrast to the unconditional proof that $\Q(\zeta_{256}+\zeta_{256}^{-1})$ has class number $1$, under the generalized Riemann hypothesis proving that $\Q(\zeta_{512}+\zeta_{512}^{-1})$ has class number $1$ will take knowledge of just a few principal prime ideals of small norm.  However, generators of those ideals will be rather difficult to find.

In the next section we will prove the following lemma.
\begin{lem}\label{PrimeLemma}
In the real cyclotomic field $\Q(\zeta_{512}+\zeta_{512}^{-1})$, there exist algebraic integers of norms $3583$, $5119$, $6143$, $7681$, $8191$, $10753$, $11777$, $12289$, $12799$ and $13313$.
\end{lem}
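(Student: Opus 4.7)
The plan is to exhibit, for each of the ten primes $p$ in the list, an explicit algebraic integer of $\Q(\zeta_{512}+\zeta_{512}^{-1})$ whose norm equals $p$. The first step is the routine congruence check that each $p$ satisfies $p \equiv \pm 1 \pmod{512}$, which guarantees that $p$ splits completely into $128$ prime ideals of the real cyclotomic field. Since producing an algebraic integer of norm $p$ is equivalent to exhibiting a generator of one (hence, by Galois action, all) of those prime ideals, the proof reduces entirely to a constructive computer search, and the verification that a given candidate $x$ has the claimed norm is a single polynomial evaluation carried out in a computer algebra system such as PARI/GP.

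The construction mirrors the one used in the previous section for conductor $256$. Set $b_0 = 1$, $b_j = 2\cos(2\pi j/512)$ for $j = 1, \dots, 127$, and $c_k = \sum_{j=0}^k b_j$; since the $b_j$ are monic integer polynomials in $b_1$ of strictly increasing degree (via Chebyshev polynomials), the $c_k$ form an integral basis for the ring of integers $\mathcal{O}$ of the degree-$128$ field. I would search over sparse combinations $x = \sum_i a_i c_{j_i}$ with a small number of nonzero $a_i$ drawn from a short symmetric range, compute $N(x) = |\prod_\sigma \sigma(x)|$, and record the odd part. Because $2\mathcal{O}$ is totally ramified with $b_1$ a principal generator of norm $2$, any $x$ with $N(x) = 2^e p$ yields $x/b_1^e \in \mathcal{O}$ of norm $p$. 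Primes $p$ not reached directly can be extracted by the composite-divisor trick already used for conductor $256$: if one finds $x$ with $N(x) = pq$ and already has $y \in \mathcal{O}$ with $N(y) = q$, then for a suitable Galois automorphism $\sigma$ the quotient $x/\sigma(y)$ is an element of $\mathcal{O}$ of norm $p$.

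The hard part, as the text above explicitly warns, is that doubling the degree to $128$ dramatically inflates the sparse-vector search space while the target primes remain both small and specific, so the ultra-sparse searches (five or six nonzero coefficients, coefficients in $\{-1,0,1\}$) that sufficed for conductor $256$ will almost certainly miss most of the ten targets here. I would therefore expect to push the support size and coefficient range considerably further, accept a much larger candidate pool of composite norms, and rely heavily on the composite-divisor step by bootstrapping from whichever targets are reached first. One can also seed the search with short vectors produced by LLL reduction of a lattice basis for any particular prime ideal of $\mathcal{O}$ above $p$ (computed via a factorization of the minimal polynomial of $\zeta_{512}+\zeta_{512}^{-1}$ modulo $p$), which tends to produce candidate generators of much smaller norm-height than a random sparse vector. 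The main obstacle is thus not conceptual but computational: allocating enough search effort across the ten targets so that each prime appears, either directly or as a composite cofactor, among the norms encountered.
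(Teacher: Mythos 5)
Your high-level approach---exhibit explicit algebraic integers and verify their norms by direct evaluation---matches the paper's proof exactly: that proof is a bare list of ten coefficient vectors in the basis $\{b_0,\dots,b_{127}\}$, $b_0=1$, $b_j=2\cos(2\pi j/512)$, together with the product formula for the norm over the $128$ embeddings. The gap is that the witnesses are the entire content of the proof and you do not produce them. Several of the paper's vectors have coefficients of magnitude up to roughly $10^{13}$, so they are not sparse in any sense and are not recoverable from a description of a search strategy.

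More substantively, the paper's own account of how the witnesses were located (in the section that follows the lemma's proof) shows that the search plan you outline would not have converged on its own. Sparse $\{-1,0,1\}$ vectors over either the $b_j$ or the $c_k$ basis turned up only prime norms near $10^6$, or composite norms such as $6143^2\cdot 6215646209$ in which neither factor was yet known to be principal---so the composite-divisor step had nothing to bootstrap from. The missing ingredient is a subfield argument combined with Weber's oddness theorem: from an element $\alpha\in\Q(\zeta_{512}+\zeta_{512}^{-1})$ of norm $6143^2$ and a second element $\beta$ of the same norm obtained by including an element of norm $6143$ from $\Q(\zeta_{256}+\zeta_{256}^{-1})$, one checks that no quotient $\sigma(\alpha)/\beta$ is an algebraic integer, deduces that $P^2$ is principal for a prime $P$ of norm $6143$, and then invokes Weber's result that the class number of this field is odd to conclude that $P$ itself is principal; only after this foothold can generators for the remaining primes be produced by the composite-divisor chains you describe. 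Your LLL suggestion is plausible in spirit, but at degree $128$ it amounts to the principal ideal problem and is not obviously tractable; the paper does not rely on it.
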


The existence of these algebraic integers of small prime norm allows us to prove the main result.

\begin{proof}[Proof of Theorem \ref{MainResult512}]
Let $K= \Q(\zeta_{512}+\zeta_{512}^{-1})$, which has degree 128 over $\Q$.  Let $h$ be its class number.  Let $F$ be the function
\[F(x) = e^{-(x/c)^2},\]
with $c = 8.7$.

The following integral can be calculated using numerical integration:
\[\int_0^\infty \frac{1-F(x)}{2}\left( \frac{1}{\sinh{\frac{x}{2}}} + \frac{1}{\cosh{\frac{x}{2}}} \right) \, dx < 0.3358.\]

A prime integer $p$ totally splits in $K$ if and only if $p$ is congruent to $\pm 1$ modulo $512$.  Let $S$ denote the set of primes,
\[\{3583, 5119, 6143, 7681, 8191, 10753, 11777, 12289, 12799, 13313\},\]
which are the ten smallest prime integers which are congruent to $\pm 1$ modulo $512$.  Using Lemma \ref{PrimeLemma}, there is a lower bound for the contribution from the prime ideals,
\[ 2 \sum_{p \in S} \sum_{m=1}^\infty \frac{\log p}{p^{m/2}}F(m \log p) > 2 \sum_{p \in S} \frac{\log p}{\sqrt{p}}F(\log p) > 0.6898.\]

We apply Lemma \ref{SplitPrimeLemma} to get an upper bound for the class number,
\[h < 147.\]

Finally, we can apply the ``rank corollary" from \cite{Masley} (or the results of \cite{Fukuda}) to see that $h=1$.

\end{proof}

\section{Algebraic integers of small prime norm in $\Q(\zeta_{512}+\zeta_{512}^{-1})$}

\begin{proof}[Proof of Lemma \ref{PrimeLemma}]
It suffices to explicitly provide the elements which have the desired norms.  The real cyclotomic field $\Q(\zeta_{512}+\zeta_{512}^{-1})$ has an integral basis, $\{b_0, b_1,\dots, b_{127}\}$ where $b_0 = 1$ and $b_j = 2 \cos{(2 \pi j/512)}$ for $j$ from $1$ to $127$.  Given an element $(a_j)$ of the field in this basis, the norm of $(a_j)$ is the absolute value of
\[\prod_{k=0}^{127} \left (a_0 + \sum_{j=1}^{127} a_j \cos{\frac{2 \pi j(2k+1)}{512}} \right )\]
Using this basis, we list ten algebraic integers.  

This element has norm $3583$:

\tiny\noindent
[549, 471, 40, 400, 546, 13, 144, $-222$, 769, 1114, 4, 109, 1498, $-48$, $-272$, 1393, 337, 295, $-304$, 262, 653, $-5$, 487, 991, 1080, 604, $-176$, 147, 517, 299, $-136$, 5, 331, 1051, 943, 158, 281, 9, $-299$, $-337$, 685, 105, 65, 981, 1039, $-104$, $-316$, 999, 519, 195, 361, 367, 1033, 556, 435, 533, 126, $-393$, 391, 1413, 100, 142, 373, 268, $-875$, $-246$, $-117$, $-327$, 1530, 695, $-210$, 1137, 844, $-882$, 101, 254, $-347$, 281, 441, 1727, 909, $-729$, $-397$, $-117$, 478, $-947$, 67, 1040, 445, 138, 154, 473, 412, 324, $-164$, 625, $-50$, 156, 141, $-7$, 376, $-985$, $-434$, 1002, 503, $-343$, $-204$, $-200$, $-67$, 170, $-922$, 554, 867, $-172$, 29, 387, 797, $-470$, 155, 42, $-270$, $-14$, 31, 246, 385, 162, $-137$, 197].
\normalsize

This element has norm $5119$:

\tiny\noindent
[147, $-104494$, $-26676$, 12081, 25706, $-14209$, 71256, 99827, 36209, $-47677$, 66855, 65451, 4681, $-88975$, $-15784$, 32245, 41017, 45678, $-5821$, 127438, 17275, 161270, 121388, 141018, 76565, 18507, $-25523$, 8820, 86486, $-3883$, $-59945$, $-32692$, 7427, 168170, 79532, 111518, $-40813$, $-721$, 100225, 38681, 35033, $-59976$, $-26151$, $-150361$, 17703, $-10107$, 7624, $-39793$, $-74576$, 12244, 49328, 108034, 79004, $-83833$, $-31377$, 723, 70856, $-19714$, 6073, 22609, 4054, 29678, 26444, 144109, $-16167$, 13697, 4492, 36832, 68459, 100913, 66179, 7047, $-3034$, 156125, 61044, 32403, $-6778$, 114846, $-30960$, 2675, 25809, $-21964$, 1166, $-119242$, 24160, 13870, 29732, 10150, 24991, 54782, 55211, 12440, $-65770$, $-63049$, $-36834$, $-77524$, 18444, $-165290$, 500, $-59284$, 36279, 53748, 34020, 9670, 13433, 81430, 31887, 115248, $-13390$, $-87277$, $-73639$, $-784$, 62328, $-25731$, $-8249$, 68768, 9913, 136174, 153369, 108430, $-60208$, 10978, $-25491$, 27206, 4128, $-8680$, $-41807$, $-88057$].
\normalsize

This element has norm $6143$:

\tiny\noindent
[687, 1109, $-264$, $-409$, 1118, 826, $-717$, 215, $-14$, 920, 22, $-20$, 1564, $-1030$, 424, 959, 90, $-540$, 374, 435, $-334$, 207, 140, 65, 841, $-339$, 124, 378, $-376$, 114, $-760$, 672, 232, $-973$, 341, $-71$, $-284$, 495, 329, $-106$, $-246$, 78, 301, $-475$, $-756$, 1359, $-410$, 441, 265, $-392$, 1402, $-27$, 320, 599, 1365, 258, $-473$, 416, 260, 1033, 197, 212, 1541, $-1026$, 688, 1377, $-1154$, 743, 406, 298, 127, $-1017$, 7, $-8$, 987, 440, $-730$, 199, 359, $-1041$, $-664$, 706, $-612$, $-125$, 1, 104, $-702$, $-215$, 335, 4, 725, $-88$, $-497$, $-665$, $-557$, 590, $-346$, 856, 338, $-862$, 1369, $-709$, 40, 303, 711, 783, $-572$, 282, 68, $-528$, 837, 882, 565, 165, $-50$, 41, $-535$, 299, $-351$, 1012, 15, $-183$, $-18$, $-615$, 758, $-158$, $-234$, 1738].
\normalsize

This element has norm $7681$:

\tiny\noindent
[12419, $-72$, 3815, 1193, $-3972$, $-4639$, $-9741$, $-525$, 20798, $-2284$, $-3016$, 2627, 13769, 7618, 9084, 5902, 7104, 2023, 7378, 576, 16966, 1470, $-15719$, 1047, 4681, 1683, 9320, $-2609$, 6279, 3161, 1227, 4325, 5423, $-2032$, 1901, 4788, 15042, $-4879$, 2991, 4479, 11213, 11266, 1431, $-17$, 16203, 4789, 7726, $-3520$, $-5160$, $-2409$, $-8557$, 5297, 9307, $-4523$, 3415, $-4331$, $-221$, 4670, $-1272$, $-5870$, 532, 637, 1065, $-415$, 14452, $-7845$, 3158, 12392, 3004, 5689, 5914, $-4077$, 18668, 9144, $-4237$, $-8474$, $-7417$, 1399, 1158, 4120, 2845, $-6194$, 3372, 5412, 5860, 5527, $-3739$, $-389$, 3600, 32, 5343, $-4709$, 8512, 2466, 902, 11131, 8876, 187, 11267, 104, 6654, 2458, $-10487$, $-2800$, $-10$, 1120, $-5029$, $-5069$, $-5301$, $-5294$, 7063, 5281, $-10073$, 7, $-5930$, 2933, 16618, $-8242$, $-4914$, $-2260$, $-1814$, 4186, 5099, 2296, 4516, $-1349$, 10952, 1756].
\normalsize

This element has norm $8191$:

\tiny\noindent
\begin{flushleft}
[12200476407, $-292487755$, 12237320977, $-308254192$, 12300523691, $-486916755$, 12065092025, $-758732357$, 12260434117, $-988892432$, 12200465663, $-959314971$, 12023444230, $-807694380$, 11534015416, $-285959572$, 11389976464, 263767191, 11494483606, 118977394, 11822670966, $-186682783$, 11780632874, $-30299156$, 11717352645, 106809269, 11245815188, 100697928, 11092639613, $-233023507$, 11092354501, $-231762119$, 10881098713, 220511907, 9986637594, 698898280, 9678586036, 759689527, 9976530302, 456120687, 10566559282, 80998844, 10565421255, 498544865, 10139442513, 1100466640, 9626488800, 1042764202, 9674228265, 454041510, 9544786324, 488174658, 9210076976, 864190149, 8328870937, 1188900157, 8005203607, 765010284, 8638409778, 166529311, 9326591748, 57194082, 8907343957, 660242446, 8270761641, 1177075128, 7941394142, 1196657469, 8196375507, 475786322, 8151556459, 387573310, 7624292526, 778800668, 6924858402, 828150905, 6972011653, 36936460, 7364152427, $-523531189$, 7603045547, $-522042227$, 6942706650, 218982829, 6125518634, 669337729, 6123178426, 569668420, 6591787183, 75768606, 6400974331, 36431967, 5893316966, 101388020, 5319695264, 102495640, 5231238345, $-682275789$, 5286021245, $-1039275885$, 5013380379, $-634603921$, 4303406163, 30415910, 3658451229, 147125795, 3460063055, 75926902, 3703180326, $-323853178$, 3653655136, $-238392752$, 3198652693, $-165979401$, 2750313619, $-104932882$, 2492617006, $-365121077$, 1964774200, $-432485961$, 1418575903, $-346397268$, 795227753, 69588008, 117183978, 17091166, $-84709442$, 30059544].
\end{flushleft}
\normalsize

This element has norm $10753$:

\tiny\noindent
\begin{flushleft}
[2115315, 294536700, $-1259710$, 290751233, $-658734$, 286253190, 6202172, 290637083, 1020656, 290423296, $-785288$, 286809730, 431913, 290849807, $-1972309$, 289141750, 1493136, 277079280, 5027120, 281808363, 2632769, 282772045, 2606799, 275911224, $-1617886$, 279800829, $-2830788$, 277631577, 6603296, 268177615, 1887075, 267705912, 2525056, 263121511, 8861721, 262918374, $-2519628$, 265098198, $-2478906$, 254252249, 7330168, 255187684, 1527574, 249540354, 3108613, 238201953, 6373217, 243789683, $-264411$, 242990286, 2452961, 229857806, 49331, 232535039, 2084912, 223094230, 9146065, 215098451, 115902, 219852521, 387854, 211046359, 6617566, 208066712, $-5512979$, 203853094, 2938804, 191528668, 10507462, 191238633, $-124009$, 186251412, 976595, 177087267, 1202043, 182974650, $-2866736$, 170675998, 2795285, 158104278, 1621262, 160310188, 2372332, 148894811, 5059504, 143612130, $-6568863$, 146665733, $-1471899$, 134710036, 2866574, 126299884, $-3787243$, 120781480, 2780159, 111599813, 3927794, 110874132, $-4748117$, 102000659, $-2104084$, 96925239, $-305319$, 91713010, $-760194$, 77534598, 2343950, 71004081, $-388880$, 72762622, $-429211$, 60416092, $-2906516$, 53531722, $-4333035$, 49933247, 1898531, 37573906, 3511913, 27924006, $-1569879$, 24254418, 65830, 21400232, $-1564070$,12689918, $-4103286$, 4118337].
\end{flushleft}
\normalsize

This element has norm $11777$:

\tiny\noindent
[1309, 111, 323, 687, 443, 1010, 109, 133, 384, 217, 263, 610, 12, 183, $-2$, 663, 446, 1483, 241, 407, $-32$, 848, $-145$, 455, $-982$, 157, $-434$, 1121, $-320$, 789, 671, $-16$, 194, 752, $-233$, 1191, $-367$, 1, 382, 287, 23, 794, 488, 78, 125, $-416$, $-28$, 519, 1231, $-387$, 817, 479, 1294, 736, 697, $-789$, 45, 220, 291, 126, 1277, $-1062$, 577, $-67$, 1028, 1270, 788, $-567$, 719, 46, 716, $-105$, 6, $-16$, 80, 75, $-194$, 72, 657, $-319$, $-311$, 110, 1058, 691, 173, 39, $-329$, 164, $-388$, 241, 524, $-45$, $-381$, $-1016$, $-60$, 845, 110, 109, 508, 30, 1454, $-291$, $-76$, 17, 41, $-465$, 242, $-49$, 854, $-286$, $-57$, $-612$, 1553, $-54$, 364, 11, 948, $-428$, 189, $-416$, 612, 319, 515, $-854$, 237, 204, $-978$, $-27$].
\normalsize

This element has norm $12289$:

\tiny\noindent
\begin{flushleft}
[617693837477, 2370075244431, $-339648780201$, 2238090913237, $-498204865353$, 2221056021584, $-152042916658$, 3110568633642, $-31712501506$, 1411860086389, 80359803004, 2546233401730, $-182447396621$, 2299547040386, $-290214250390$, 2590997629540, $-326219315631$, 2312166834647, 48131827765, 1783289384553, 378442675003, 2329438770514, $-580294589938$, 2218401635376, $-717839827005$, 3197597713419, 704535529089, 1130283380417, $-184355619389$, 2116839801895, $-414187025834$, 2857033836333, $-217842459574$, 1843457953282, 62852900411, 2356758151872, $-228936921555$, 1651643367144, $-266752316105$, 2381610344981, 466861508085, 1879106118549, $-578325063236$, 2244797414571, $-402842736038$, 2139817462475, 171310567795, 1362268990875, 224229141506, 2092865456148, $-301525068860$, 2081510046657, $-398952907693$, 2330201794229, $-255178406315$, 1066318926881, 493751897628, 1729740541006, $-103759157461$, 2173771486057, $-620363038292$, 1867081383268, $-151277221788$, 1353941271448, 179948629945, 1479240060095, 342020052730, 2184516411619, $-717841372895$, 650389362044, $-44740662985$, 2315655696498, $-4478644809$, 1378542962891, $-77525487333$, 893047645591, 179430931459, 1668844385937, $-220938039747$, 1238056324819, $-388417780359$, 1758644487369, 49162113897, 574831985668, 101296936710, 1580353533011, 106296616993, 578837428302, $-424400197895$, 1585976564060, $-288654893227$, 1661210261127, 439833142557, $-247202126371$, $-189885602018$, 1069320100511, 14965483619, 1372382819001, $-350945437549$, 1166656722085, $-26875340031$, $-112046953946$, 264482159358, 971939323123, 315443374, 737076751102, $-242050923233$, 625328766932, $-163049388264$, 651831316391, 103505212143, 150204548047, 48732385133, 676366606419, 22487056291, 50736674216, 12330433962, 876281794400, $-244619123714$, $-23141241475$, $-425071710999$, 173800142051, 871047731302, $-22908844162$, 21071113068, 33711180585, -856494602158, 969230569158].
\end{flushleft}
\normalsize

This element has norm $12799$:

\tiny\noindent
\begin{flushleft}
[184037827075, 27497694581842, $-19626196308$, 27547128880119, 110431733149, 27555147977579, $-43762006334$, 27628048533330, 363210681747, 27670842645800, 33491843358, 27407323326486, 288418234223, 27749482284368, 315874648397, 27372865864009, 330043125975, 27299903871399, 340523125682, 27015368233356, 391243147017, 27071327899417, 288329303744, 26642786767103, 149912398940, 26500392938787, 109894359842, 26144374880238, 146407833367, 26161831207103, $-57910866992$, 25492292196379, $-274642305172$, 25540457641474, 142369219973, 25160228118999, $-129682164625$, 24702675340198, $-69429495725$, 24538098116712, 46349415635, 24444616969505, 156136351425, 23931205738586, $-2961224626$, 23705831310594, 256615446179, 23524258730874, 140896966972, 23256896414578, 205005339494, 22739364944777, $-15843992634$, 22345779576504, 357116818963, 22065388666012, 80818167553, 21316751144174, $-43890778334$, 20910703796699, 42084499799, 20484469377007, 77475653685, 19785027507686, $-263756725875$, 19271681904259, $-103004425019$, 19020439605835, $-269420692984$, 18334556916614, $-208404770122$, 17932081887237, $-248797915350$, 17310365165180, $-126861808332$, 17069655283837, $-89882204515$, 16366181711576, $-70223384746$, 15894548371886, 59088655957, 15314097572833, 224287929661, 14934547282661, 44599857235, 14137788660464, 57486988045, 13843047543817, 144867254487, 12982325354549, $-13428448847$, 12509458887132, 8304629546, 11689066750504, $-249842160599$, 11153328440462, $-30856843432$, 10351054047936, $-226948948743$, 9587784962712, $-178179267698$, 8855105075807, $-263989946416$, 8459099826011, $-46611349150$, 7581871533541, $-417612144252$, 6960647418598, 53356750842, 6559779943246, $-164811255782$, 5847169871093, $-22801282585$, 5276876192677, $-133695961049$, 4561501186914, 267900193832, 4087343983235, 106671432015, 3102753616610, 112993348875, 2619490858980, 125041798071, 1829389680073, 297752663615, 1181872802028, -53529707223, 69559134213].
\end{flushleft}
\normalsize

This element has norm $13313$:

\tiny\noindent
\begin{flushleft}
[8916659723289, 47268532674, 8908556110733, 70481864656, 8937291165906, 67739426562, 8861569846044, $-7725649614$, 8829707041453, 47638278771, 8861390871712, 48133856326, 8764144540244, $-48910258200$, 8687350180924, 1357021369, 8733804704947, 29074824782, 8616774037745, $-70624616649$, 8540760686368, $-24859730341$, 8532405397014, $-11889210622$, 8459821763982, $-28518190023$, 8372357336801, $-56800374255$, 8301117283526, $-11761801095$, 8281446215115, 17501367664, 8184340584635, $-33759340894$, 8063590959098, $-5354668175$, 8064978655183, 85625679864, 7966412060993, $-8044159715$, 7800415831985, 33108335988, 7800989809347, 100637609567, 7667310373669, 32871123796, 7512849402161, 35302340094, 7428424479889, 77578363916, 7318557559367, 52248738806, 7146998380511, 14965239152, 7002340196323, 20703950879, 6889137463326, 58623632102, 6752305608406, $-14386714403$, 6508893132068, $-45339801496$, 6432979746206, 50051731390, 6282439291691, $-47946996920$, 6027018399904, $-74116504992$, 5922613371875, 7112222086, 5791260006111, $-33927849059$, 5550734846917, $-74105570364$, 5414865621403, $-7413305441$, 5276451123686, $-5031097618$, 5096944465149, $-12408772293$, 4898309553471, $-26654004178$, 4753486648791, 61879736346, 4623095676929, 27625581497, 4356719254730, $-11365677549$, 4213264031932, 85683966203, 4081483059666, 64666311773, 3800571512362, $-11699782794$, 3619743124051, 84748679823, 3489223839999, 61110804212, 3204599108570, 3360620284, 3005433193669, 28675626079, 2812542786776, 40010051208, 2606574214541, 6075789802, 2332601110596, $-49927165145$, 2123416470081, 2846966847, 1957178717014, $-2476742731$, 1677693344486, $-96508928231$, 1427324401528, $-31753778165$, 1316756845314, 5835444314, 1028165661555, $-97373171807$, 795695608823, $-24168787111$, 656098328404, 2267514151, 418089679287, $-30049678195$, 184507943986, $-25303639014$].
\end{flushleft}
\normalsize

\end{proof}

\section{Finding principal ideals of small norm in $\Q(\zeta_{512}+\zeta_{512}^{-1})$}
While it is a straightforward matter to verify that the above elements have the desired norms, actually finding these elements poses a challenge.  Since we must search a lattice of 128 dimensions, a brute force approach of searching over a suitably sized ``box" is impractical.  For example, given an integral basis, if we were to search all elements with coefficients between $-2$ and $2$, that would mean checking the norms of $5^{128} \approx 10^{89}$ elements, which is substantially larger than the number of particles in the universe!

A more practical approach is to search over ``sparse" vectors, i.e. vectors where almost all the coefficients are zero.  The hope would be that we could find the desired elements of small prime norm, or elements that factor over primes of small norm and produce relations in the class group.

First we'll describe this process for the smaller field, $\Q(\zeta_{256}+\zeta_{256}^{-1})$, and then contrast it with the situation presented by $\Q(\zeta_{512}+\zeta_{512}^{-1})$.

\begin{exa}[Finding elements of small prime norm in $\Q(\zeta_{256}+\zeta_{256}^{-1})$]\label{Example256}
The goal is to find algebraic integers in $\Q(\zeta_{256}+\zeta_{256}^{-1})$ which have norms that are prime and congruent to $\pm 1$ modulo 256, i.e. the primes which totally split.  The ten smallest of these are 257, 769, 1279, 3329, 3583, 5119, 6143, 6911, 7681, and 7937.

The integral basis that we'll use is $\{b_0, b_1,\dots, b_{63}\}$ where $b_0 = 1$ and $b_j = 2 \cos{(2 \pi j/256)}$ for $j$ from $1$ to $63$.  We will search over sparse vectors where at most six coefficients are nonzero, and the nonzero coefficients are either $1$ or $-1$.  When we search over these sparse vectors, we do indeed find the ten prime norms we were looking for.    For example, the element $b_0 + b_1 +  b_{14} $ has norm 257, and the element $b_0 - b_3 + b_4 - b_{22} - b_{34} - b_{53}$ has norm 6143.
\end{exa}

What happens if we repeat this above process with similar sparse vectors for the larger field $\Q(\zeta_{512}+\zeta_{512}^{-1})$?  Unfortunately, we do not find any elements of small prime norm.  In fact, the two smallest prime norms found this way are rather large: 6147073 and 9627649.  In this respect, the properties of the field $\Q(\zeta_{512}+\zeta_{512}^{-1})$ are markedly different than that of the smaller field $\Q(\zeta_{256}+\zeta_{256}^{-1})$, so another approach is required.

Perhaps the best way to illustrate the approach used is to explicitly write down the particular calculations.  We start with the integral basis $\{b_0, b_1,\dots, b_{127}\}$ where $b_0 = 1$ and $b_j = 2 \cos{(2 \pi j/512)}$ for $j$ from $1$ to $127$.  As mentioned above, searching over sparse vectors led to two elements of prime norm:
\[N(b_0 + b_1 - b_8 - b_9 + b_{48}) = 6147073\]
\[N(b_0 + b_1 +b_2  + b_9 + b_{10} - b_{48}) = 9627649\]

We may also consider algebraic integers with norms which are even, since we can always repeatedly divide by any element of norm $2$, such as $b_1$, until we get an algebraic integer of odd norm.  For example,
\[b_1^{-2}(b_1 + b_{10} + b_{26} +b_{35} +b_{43} +b_{50} +b_{52} +b_{95}),\]
is algebraic integer of norm $1142783$.

Another useful element is
\[b_1^{-1}(b_1 + b_{18} - b_{39} + b_{108} + b_{127}),\]
which has norm $9627649 \cdot 2078207$.  This produces a relationship in the class group between a prime of norm $9627649$, which is known to be principal, and a prime of norm $2078207$.  Therefore all primes of norm $2078207$ are principal, and there exists a unique Galois automorphism $\sigma$ for which
\[\frac{b_1^{-1}(b_1 + b_{18} - b_{39} + b_{108} + b_{127})}{\sigma(b_0 + b_1 +b_2  + b_9 + b_{10} - b_{48})}\]
is an algebraic integer of norm $2078207$. 

However, further search by the author of sparse vectors using the basis $\{b_0,\dots,b_{127}\}$ found neither elements of small prime norm nor elements which produce useful relations in the class group.  To find more suitable elements, we choose a different basis over which to search sparse vectors.  For $k$ from $0$ to $127$, put
\[c_k = \sum_{j=0}^k b_j.\]
The $c_k$ are the cyclotomic units, and $\{c_0,\dots,c_{127}\}$ form an integral basis.  Sparse vectors over this basis can produce some algebraic integers of relatively small norm or produce interesting class group relations.  Two algebraic integers that prove to be of critical importance are
\[ c_0 + c_6 + c_{15} + c_{39} - c_{104} + c_{111} + c_{120}\]
which has norm $2078207 \cdot 6215646209$, and
\[c_0 + c_3 + c_{19} + c_{64} - c_{71} + c_{103} + c_{119}\]
which has norm $6143^2 \cdot 6215646209$.
By choosing the appropriate Galois conjugates and taking quotients, we can explicitly write down an algebraic integer of norm $6215646209$ and then also an algebraic integer $\alpha$ of norm $6143^2$.

In the subfield $\Q(\zeta_{256}+\zeta_{256}^{-1})$, it is easy to find an algebraic integer of norm $6143$ and by including that element in the larger field, we produce an algebraic integer $\beta$ of norm $6143^2$.  One example is
\[\beta = b_0 - b_6 + b_8 - b_{44} - b_{68} - b_{106}.\]

Now given $\alpha$ and $\beta$ as above, which both have norm $6143^2$, consider the quotients $\sigma(\alpha)/\beta$ for each Galois automorphism $\sigma$.  A calculation shows that none of the quotients are algebraic integers.  Thus, we have an inequality of principal ideals
\[(\sigma(\alpha)) \neq (\beta)\]
for every $\sigma$.

Let $\eta$ denote a generator of the Galois group $\op{Gal}(\Q(\zeta_{512}+\zeta_{512}^{-1})/\Q)$, which is cyclic of order 128.  Then $\eta^{64}$ fixes the subfield $\Q(\zeta_{256}+\zeta_{256}^{-1})$, and
\[ (\beta) = P \cdot \eta^{64}(P)\]
where $P$ is some prime ideal of norm $6143$ (noting that, as a prime of $\Q(\zeta_{256}+\zeta_{256}^{-1})$, $\beta$ lies over 6143 so it does indeed split in $\Q(\zeta_{512}+\zeta_{512}^{-1})$).

For suitably chosen automorphism $\sigma$, we have that
\[ (\sigma(\alpha)) = \tau(P) \cdot \eta^{64}(P)\]
where $\tau$ is not the identity automorphism.  Taking quotients shows
\[\frac{P}{\tau(P)}\]
is a principal fractional ideal.  Suppose $\tau$ has order $m$ in the Galois group.  Since $\tau$ is not the identity automorphism, $m$ must be even, so
\[\frac{P}{\eta^{64}(P)} = \frac{P}{\tau^{m/2}(P)} = \frac{P}{\tau(P)} \frac{\tau(P)}{\tau^2(P)} \cdots \frac{\tau^{m/2 - 1}P}{\tau^{m/2}(P)}\]
is a principal fractional ideal.  Thus
\[P^2 = (\beta) \frac{P}{\eta^{64}(P)}\]
is a principal ideal.  But Weber \cite{Weber} showed that the class number is odd, so $P$ itself must be a principal ideal of norm 6143.  This approach can be further extended to calculate an actual generator of $P$.

Once we have shown that a prime of such small norm is principal, it is relatively easy to use sparse vectors to generate more class group relations to show that other prime ideals of small norm are principal and to find their generators.

\section{Concluding remarks}
The technique described in this paper for calculating class numbers is applicable to other totally real fields of large discriminant, provided that we can establish a class number upper bound by counting sufficiently many prime ideals of small norm in the Hilbert class field.  This allows us to attack the class number problem for a large number of totally real fields which have not been treatable by previous methods.  By extension, we can address the class number problem for CM fields, provided we have information about the relative class number.

\subsection*{Acknowledgements}
I would like to thank my advisor, Henryk Iwaniec, for introducing me to Weber's class number problem and his steadfast encouragement.  I am exceedingly thankful for the careful reading and suggestions of Jerrold Tunnell and Lawrence Washington.  I appreciate the interest and feedback of Stephen D. Miller, Ren\'e Schoof and Christopher Skinner.  Finally, I would like to thank Takashi Fukuda, Keiichi Komatsu and Takayuki Morisawa for our fruitful discussions about the Weber problem and much else.

\end{document}